\newtheorem{theorem}{Theorem}[section]
\newtheorem{lemma}{Lemma}[section]
\newtheorem{remark}{Remark}[section]
\numberwithin{equation}{section}
\newcommand{\mr}{\mathbb{R}}
\begin{document}
\title[Extrema of dynamic pressure in flows with underlying currents and infinite depth]
{Extrema of the dynamic pressure in an irrotational regular wave train with underlying currents and infinite depth}%
\author[Lili Fan]{Lili Fan}%
\address[Lili Fan]{College of Mathematics and Information Science,
Henan Normal University, Xinxiang 453007, China}
\email{fanlily89@126.com}
\author[Hongjun Gao$^{\dag}$]{Hongjun Gao$^{\dag}$}
\address[Hongjun Gao]{School of Mathematical Sciences, Institute of Mathematics,
 Nanjing Normal University, Nanjing 210023, China;  \ Institute of Mathematics, Jilin University, Changchun 130012, China}
\email{gaohj@njnu.edu.cn\, (Corresponding author)}
\author[Lei Mao]{Lei Mao}
\address[Lei Mao]{School of Mathematical Sciences, Institute of Mathematics,
 Nanjing Normal University, Nanjing 210023, China}


\begin{abstract}In this paper, we investigate the maximum and minimum of the dynamic pressure in a regular wave train with underlying currents and infinite depth respectively. The result is obtained using maximum principles in combination with exploiting some of the physical structures of the problem itself.
\end{abstract}

\date{}

\maketitle

\noindent {\sl Keywords\/}: Stokes wave, dynamic pressure, underlying current, deep-water.

\vskip 0.2cm

\noindent {\sl AMS Subject Classification} (2010): 35Q35; 35J15. \\

\section{Introduction}
\large
This paper aims to study the dynamic pressure in an irrotational regular wave train with uniform underlying currents and infinite depth respectively. The dynamic pressure, encoding the fluid motion, is one of the two components of the total pressure beneath a surface water wave and the other is a hydrostatic part that counteracts the force of gravity to make the fluid be in an equilibrium state. The study of the behaviour of the pressure has not only theoretical sense but also practical applications, such as the estimations of the force acting on maritime structures \cite{Cl1,Co5,ES,D2,K,Ol}.

The significative investigations of the structures and behavior of Stokes wave have been carried out by Constantin and his colleagues, including trajectories of particles \cite{Co2}, mean velocities \cite{Co4}, analyticity and symmetry of streamlines \cite{CoEs,CoEh}, pressure \cite{CS}, dynamic pressure \cite{Co1} and so on.  In regards to Stokes wave in fluid domain with infinite depth, Henry has investigated the horizontal velocity \cite{D}, the trajectories of particles \cite{D3} and the pressure \cite{D1}. The study of the irrotational regular wave trains with uniform underlying currents is due to Basu \cite{Ba}, where he obtained that when the speed of the current is greater than the wave speed, the nature of the flow fields is different from what is expected where no underlying current was considered \cite{Co2,Co,Co3} and the pressure field is unaffected due to the presence of underlying currents. There are also some other noteworthy work related to this area of nonlinear water waves \cite{G,Jo1,L1,L2,U,VO}.

Inspired by the work of Constantin \cite{Co1}, we are concerned in this paper about the influence of the uniform underlying currents and infinite depth on the extrema of the dynamic pressure. Due to the appearance of currents, taking the current strength $k$ being greater than the wave speed $c$ for example, we first eliminate the possibility of the relative mass flux $m\geq 0$ by employing the maximum principle and Hopf's maximum principle, then a reapplication of Hopf's maximum principle on the stream function $\psi$ enable us get that the horizontal velocity $u$ is greater than the wave speed $c$, which Basu obtained by analyzing the non-dimensional system of the stream function, an equivalent system for the governing equations, in \cite{Ba}. Next, we follow the idea presented in \cite{Co1} to obtain the extrema of dynamic pressure. As for dynamic pressure in deep water, the main difficulty comes from the failure of application of the maximum principle in the fluid domain with infinite depth. Base on the result about the total pressure constructed by Henry \cite{D1}, we eliminate the possibility that the extrema can be obtained in the interior of the domain by assuming the contrary. The investigation of the dynamic pressure along the boundary of fluid domain is relatively normal. In conclusion, we show that the presence of underlying currents and infinite depth makes no difference on the position of extrema of dynamic pressure, which attains its maximum and minimum at the wave crest and wave trough respectively.

The remainder of this paper is organized as follows.  In Section 2, we present the governing equations for the irrotational regular wave trains with uniform underlying current. In Section 3, we prove the maximum and minimum  of the dynamic pressure occur at the wave crest and the wave trough respectively, unaffected due to the presence of uniform underlying currents and infinite depth.
\section{The governing equations}
\large
The problems we consider are two-dimensional steady periodic irrotational gravity water waves over a homogeneous fluid with a underlying current strength $k$, the $x$-axis being the direction of wave propagation and the $y$-axis pointing vertically upwards. The bottom is assumed to be flat, given by $y=-d$ with $d>0$ representing the mean water depth, and be impermeable. Given $c>0$, assume this two-dimensional periodic steady waves travel at speed $c$; that is, the space-time dependence of the free surface $y=\eta(x)$, of the pressure $P=P(x,y)$, and of the velocity field $(u,v)$ has the form $(x-ct)$ and is periodic with period $L>0$. Using the map $(x-ct,y) \mapsto (x,y)$, we move to a new reference frame travelling alongside the wave with constant speed $c$ where the fluid flow is steady. Then the governing equations for the gravity waves are embodied by the following nonlinear free boundary problem \cite{Co}:
\begin{align}
(u-c)u_x+vu_y&=-\frac 1 \rho P_x,\quad\;\;\quad -d\leq y\leq \eta(x),\label{2.1}\\
(u-c)v_x+vv_y&=-\frac 1 \rho P_y-g,\!\quad -d\leq y\leq \eta(x),\label{2.2}\\
u_x+v_y&=0,\qquad\;\;\qquad\;-d\leq y\leq \eta(x),\label{2.3}\\
u_y&=v_x,\qquad \;\; \qquad -d\leq y\leq \eta(x),\label{2.4}\\
v&=0,\quad \; \quad \quad \quad \quad \textrm{on} \quad y=-d,\label{2.5}\\
v&=(u-c)\eta_x,\;\;\:\quad \textrm{on} \quad y=\eta(x),\label{2.6}\\
P&=P_{atm},\;\;\qquad\quad \textrm{on} \quad y=\eta(x),\label{2.7}
\end{align}
here $P_{atm}$ is the constant atmospheric pressure, $g$ is the (constant) acceleration of gravity and $\rho$ is the (constant) density. A regular wave train is a smooth solution to the governing equations \eqref{2.1}-\eqref{2.7} and satisfies \cite{Co}:

(i) $u$ and $v$ have a single crest and trough per period,

(ii) $\eta$ is strictly monotone between successive crests and troughs,

(iii) $\eta, u$ and $P$ are symmetric and $v$ is antisymmetric about crest line.

Without loss of generality, we may assign that the crest is located at $x=0$ and the trough at $x=L/2$. Using \eqref{2.3} we can define the stream function $\psi$ up to a
constant by
\begin{equation}\label{2.8}
  \psi_y=u-c,\quad\psi_x=-v,
\end{equation}
and we fix the constant by setting $\psi=0$ on $ y=\eta(x)$. We can integrate \eqref{2.8} to get $\psi=m$ on $y=-d$, for
\begin{equation}\label{2.9}
 m= - \int_{-d}^{\eta(x)}(u(x,y)-c)dy,
\end{equation}
where $m$ is the relative mass flux, and by writing
\begin{equation}\label{2.10}
  \psi(x,y)=m+\int_{-d}^{y}(u(x,s)-c)ds,
\end{equation}
we can see that $\psi$ is periodic (with period $L$) in the $x$ variable. The level sets of $\psi(x,y)$ are the streamlines of the fluid motion. Then the governing equations \eqref{2.1}-\eqref{2.7} are transformed to the equivalent system
\begin{align}
&\psi_{xx}+\psi_{yy}=0,\quad\;\;\quad \text{for}\; -d\leq y\leq \eta(x),\label{2.11}\\
&\psi=0,\quad\;\;\quad  \text{on}\; y=\eta(x),\label{2.12}\\
&\psi=m,\quad\;\;\quad  \text{on}\; y=-d,\label{2.13}\\
&\frac {\psi^2_x+\psi^2_y} {2g}+y+d=Q \quad  \text{on}\; y=\eta(x),\label{2.14}
\end{align}
where the constant $Q$ is the total head and it is expressed by
\begin{equation}\label{2.15}
\frac {\psi^2_x+\psi^2_y} {2g}+y+d+\frac {P-P_{atm}} {\rho g}=Q
\end{equation}
throughout $\{(x,y): -d\leq y\leq \eta(x)\}$ by using the equations \eqref{2.1} and \eqref{2.2}. Then the total pressure can be recovered from
\begin{equation}\label{2.16}
P=P_{atm}+\rho g Q-\rho g (y+d)-\rho\frac {\psi^2_x+\psi^2_y} {2}.
\end{equation}
Hence the dynamic pressure we concerned in this paper, defined as the difference between the total pressure $P(x,y)$ and the hydrostatic pressure $(P_{atm}-\rho g y)$, is given by
\begin{equation}\label{2.17}
p=P(x,y)-(P_{atm}-\rho g y)=\rho g (Q-d)-\rho\frac {\psi^2_x+\psi^2_y} {2}.
\end{equation}
In the setting of periodic waves traveling at a constant speed at the surface of water in an irrotational flow over flat bed, we have
\begin{equation}\label{3.1}
0=\int_{-d}^{y_0}\int_0^{L}(u_y-v_x)dxdy=\int_0^Lu(x,y_0)dx-\int_0^Lu(x,-d)dx,
\end{equation}
for $y_0\in[-d,\eta(L/2)]$. Hence
\begin{equation}\label{3.2}
\int_0^Lu(x,y_0)dx=\int_0^Lu(x,-d)dx,\quad y_0\in[-d,\eta(L/2)],
\end{equation}
and the uniform underlying current defined by
\begin{equation}\label{3.3}
k=\frac 1 L \int_0^Lu(x,-d)dx
\end{equation}
is invariant with $y$. Thus, we represent the underlying current $k$ with
\begin{equation}\label{3.4}
k=c+\frac 1 L \int_0^L\psi_y(x,-d)dx
\end{equation}
by \eqref{2.8}.
\section{The dynamic pressure}
\large
In this section we will prove the main result of this paper.
\subsection{The dynamic pressure with underlying currents}
\large
\begin{theorem}\label{the1}
In presence of a uniform underlying current with strength $k\neq 0$, the dynamic pressure in an irrotational regular wave trains

(1) attains its maximum value and minimum value at the wave crest and trough respectively for $k\neq c$,

(2) is identically zero for $k=c$.
\end{theorem}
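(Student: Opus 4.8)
The plan is to deduce everything from the harmonicity of the stream function $\psi$ on the fluid domain $D$, which by $L$-periodicity I will treat as a bounded annular domain with $\partial D = S \cup B$, where $S=\{y=\eta(x)\}$ is the free surface and $B=\{y=-d\}$ the bed. First I would sort out the dichotomy behind the two cases. Since $\psi$ is harmonic with $\psi=0$ on $S$ and $\psi=m$ on $B$, the maximum principle gives $m\le\psi\le0$ when $m<0$, and if $m\neq0$ the strong maximum principle puts the extreme values of $\psi$ exactly on $S$ and $B$. Hopf's lemma at $B$ then gives $\psi_y>0$ on $B$ when $m<0$ (and $\psi_y<0$ on $B$ when $m>0$); comparing with \eqref{3.4}, i.e. $k-c=\frac1L\int_0^L\psi_y(x,-d)\,dx$, this yields $\operatorname{sign}(k-c)=-\operatorname{sign}(m)$, so that $k=c$ is equivalent to $m=0$ and $k\neq c$ splits into the mirror-image cases $m<0$ and $m>0$. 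Part (2) would then follow at once: $k=c$ forces $m=0$, hence $\psi=0$ on all of $\partial D$ and $\psi\equiv0$, so $u\equiv c$, $v\equiv0$, \eqref{2.14} gives the flat state $\eta\equiv0$ with $Q=d$, and \eqref{2.17} gives $p\equiv0$.

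For part (1) I would treat $m<0$ (the case $m>0$ being identical after reversing signs) and begin with the step where the current hypothesis really bites: upgrading $\psi_y>0$ from $B$ to all of $\overline D$, i.e. proving $u>c$ everywhere. On $S$ the function $\psi$ attains its maximum $0$, so Hopf's lemma gives $\partial_\nu\psi>0$; using the identity $\psi_x=-\eta_x\psi_y$ on $S$ (the tangential derivative of $\psi\equiv0$), a one-line computation turns this into $\sqrt{1+\eta_x^2}\,\psi_y>0$, so $\psi_y>0$ on $S$ too. Then $\psi_y$ is a harmonic function that is strictly positive on $\partial D$, hence $\psi_y>0$ on $\overline D$ by the minimum principle (this reproves the result of Basu \cite{Ba}). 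Consequently $\nabla\psi=(-v,u-c)$ never vanishes, so $\log|\nabla\psi|$ is harmonic on $\overline D$; therefore $|\nabla\psi|^2$, and with it $p=\rho g(Q-d)-\tfrac\rho2|\nabla\psi|^2$, attains \emph{both} its maximum and its minimum on $\partial D$.

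It then remains to locate these boundary extrema. On $S$, \eqref{2.14} gives $p=\rho g\,\eta(x)$, which by property (ii) strictly decreases from the crest $x=0$ to the trough $x=L/2$, so on $S$ the maximum $\rho g\,\eta(0)$ sits at the crest point and the minimum $\rho g\,\eta(L/2)$ at the trough point. On $B$, $v=0$ gives $\psi_x=0$, so $p=\rho g(Q-d)-\tfrac\rho2\psi_y(x,-d)^2$, and I would control its shape by passing to the half-period cell $D^+=\{0<x<L/2,\ -d<y<\eta(x)\}$: there $\psi_x$ is harmonic, vanishes on the sides $x=0$, $x=L/2$, $y=-d$ (using antisymmetry of $v$ about the crest and trough lines and $v=0$ on $B$), and equals $-(u-c)\eta_x>0$ on $S$ since $u>c$ and $\eta_x<0$. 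Hence $\psi_x>0$ in $D^+$ by the strong maximum principle, and Hopf at the bottom side gives $\psi_{xy}(x,-d)>0$ on $(0,L/2)$, so $\psi_y(\cdot,-d)=u(\cdot,-d)-c>0$ is strictly increasing there and $p|_B$ strictly decreases from the crest's base to the trough's base. Finally I would compare $S$ with $B$ along the crest and trough lines: Hopf for $\psi_x$ on $D^+$ at the side $x=0$ gives $\psi_{xx}(0,\cdot)>0$, hence $u_y(0,\cdot)=\psi_{yy}(0,\cdot)=-\psi_{xx}(0,\cdot)<0$, so $(u(0,y)-c)^2$ decreases in $y$ and $p(0,\eta(0))>p(0,-d)$; symmetrically $\psi_{xx}(L/2,\cdot)<0$, $u_y(L/2,\cdot)>0$, $(u-c)^2$ increases in $y$ on $x=L/2$, and $p(L/2,-d)>p(L/2,\eta(L/2))$. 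Putting these together, $\max_{\overline D}p=\rho g\,\eta(0)=p(0,\eta(0))$ is attained at the crest and $\min_{\overline D}p=\rho g\,\eta(L/2)=p(L/2,\eta(L/2))$ at the trough, which is (1).

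The hard part, I expect, will be precisely the step at the top of part (1). The dynamic pressure $p$ is superharmonic, so its \emph{minimum} automatically lies on $\partial D$; to force the \emph{maximum} onto $\partial D$ as well one must rule out interior critical points of $\psi$, i.e. prove $u>c$ throughout — a statement that is false in the classical no-current setting and has to be squeezed out of $k>c$ via Hopf's lemma on both $S$ and $B$. Once that is secured, the rest is careful but routine sign-chasing with Hopf's lemma and the symmetry properties (i)--(iii).
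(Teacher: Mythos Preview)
Your proposal is correct and follows essentially the same architecture as the paper's proof: determine $\operatorname{sign}(m)$ from $\operatorname{sign}(k-c)$ via Hopf's lemma at the bed, upgrade $\psi_y\neq0$ from the boundary to all of $D$ by the maximum principle for $\psi_y$, use the sign of $v$ in the half-period cell and Hopf on its sides and bottom to obtain the monotonicity of $p$ along each piece of $\partial D_+$, and invoke a maximum principle to confine the extrema of $p$ to the boundary. The one substantive technical substitution is in this last step: where the paper writes $p$ as a solution of the uniformly elliptic equation \eqref{3.14} (well-defined because $\psi_x^2+\psi_y^2>0$) and applies the maximum principle to that equation, you instead observe that $\log|\nabla\psi|$ is harmonic wherever $\nabla\psi\neq0$, which forces both extrema of $|\nabla\psi|^2$, and hence of $p$, onto $\partial D$. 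Both devices rely on exactly the same hypothesis ($u\neq c$ throughout), and yours is arguably the cleaner of the two; your handling of Hopf's lemma at the free surface, converting $\partial_\nu\psi>0$ into $\psi_y>0$ via the tangential relation $\psi_x=-\eta_x\psi_y$, is also more explicit than the paper's corresponding line.
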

\begin{proof} We analyze the case $k>c$, $k=c$ and $k<c$ respectively.

\vspace{3mm}

\noindent\emph{Case 1. $k>c$. }

\vspace{3mm}

Denote $k-c=\bar{c}>0$, then we have
\begin{equation}\label{3.5}
\bar{c}=\frac 1 L \int_0^L\psi_y(x,-d)dx>0.
\end{equation}
We claim that
\begin{equation}\label{3.6}
u(x,y)>c, \quad\text{for} \;\; -d\leq y \leq \eta(x).
\end{equation}
By the maximum principle, the $L$-periodicity in the $x$ variable of the harmonic function $\psi(x,y)$ ensures that its maximum and minimum in the domain
\begin{equation}\label{3.7}
D=\{(x,y):-d\leq y\leq \eta(x)\},
\end{equation}
will be attained on the boundary. This combined with the assumption $\bar{c}>0$ enforce that
\begin{equation}\label{3.8}
m<0.
\end{equation}
Since,

(a) if $m=0$, then $\psi$ is definitely constant throughout the domain $D$, resulting in $\bar{c}\equiv 0$ by its definition.

(b) if $m>0$, then the maximum value of $\psi$ is attained on the $y=-d$ and the Hopf's maximum principle yields $\psi_y(x,-d)<0$, which leads to $\bar{c}<0$.

This make a contradiction. For $m<0$, we get that $\psi$ attains is maximum and minimum
on $y=\eta(x)$ and $y=-d$ respectively. And a further application of Hopf's maximum principle leads to $\psi_y(x,0)>0$ and $\psi_y(x,-d)>0$. Thus $\psi_y>0$ on the boundary of $D$ and the maximum principle then ensures $\psi_y>0$ throughout $D$ since $\psi_y$ is harmonic function, then the claim \eqref{3.6} follows from the definition \eqref{2.8} of $\psi_y$.

Due to the anti-symmetry and the $L$-periodicity in the $x$ variableof $v$ and \eqref{2.5}, we have that $v=0$ along the lower and lateral boundaries of the domain
\begin{equation}\label{3.9}
D_+=\{(x,y):0<x<L/2, -d< y< \eta(x)\}.
\end{equation}
Along the upper boundary we have $\eta'(x)<0$, so the equation \eqref{2.6} and the claim \eqref{3.6} ensure that $v(x,\eta(x))\leq 0$ for $x\in[0,L/2]$. Since $v\leq 0$ on the whole boundary of $D_+$, we have from maximum principle that $v<0$ throughout $D_+$. Then the Hopf's maximum principle and the equations \eqref{2.3}-\eqref{2.4} yield
\begin{align}
&u_x(x,-d)=-v_y(x,-d)>0,\quad  \text{for} \quad x\in(0,L/2),\label{3.10}\\
&u_y(0,y)=v_x(0,y)<0,\quad  \text{for} \quad y\in(-d,\eta(0)),\label{3.11}\\
&u_y(L/2,y)=v_x(L/2,y)>0,\quad  \text{for} \quad y\in(-d,\eta(L/2)). \label{3.12}
\end{align}
Combined with \eqref{3.6} and \eqref{2.17}
\begin{equation}\label{3.13}
p=\rho g (Q-d)-\rho\frac {\psi^2_x+\psi^2_y} {2}=\rho g (Q-d)-\rho\frac {v^2+(u-c)^2} {2},
\end{equation}
\eqref{3.10}-\eqref{3.12} ensure that $p$ is strictly decreasing as we descend vertically in the fluid below the crest, it continues to decrease along the portion $\{(x,-d):0<x<L/2\}$ of the flat bed as $x$ decrease, and this strict decrease persists as we ascend vertically from the bed towards the surface, below the trough. Furthermore, in view of \eqref{2.7} and \eqref{2.17} and the monotonicity of the wave profile between the crest and a successive trough, we have that $p$ is also strictly decreasing as we descend from the crest towards the trough along the upper boundary of $D_+$.

On the other hand, by a direct calculation we have
\begin{equation}\label{3.14}
p_{xx}+p_{yy}=-\frac{2(p_x^2+p_y^2)} {\rho (\psi_x^2+\psi_y^2)}=\alpha p_x+\beta p_y,\quad \text{in}\;D,
\end{equation}
with
\begin{equation}\label{3.15}
\alpha=-\frac {2p_x} {\rho (\psi_x^2+\psi_y^2)},\quad \beta=-\frac {2p_y} {\rho (\psi_x^2+\psi_y^2)}
\end{equation}
where we have $\psi_x^2+\psi_y^2>0$ by \eqref{3.6}. Then the maximum principle ensures that the extrema of $p$ can only be attained on the boundary of $D_+$. Noting that the previous discussion about the behaviour of $p$ along the boundary of $D_+$, we conclude the result for $k>c$.

\vspace{3mm}

\noindent\emph{Case 2. $k=c$. }

\vspace{3mm}
For this case, we have
\begin{equation}\label{3.18}
\bar{c}=k-c=\frac 1 L \int_0^L\psi_y(x,-d)dx=0.
\end{equation}
Now we claim
\begin{equation}\label{3.19}
\psi(x,y)\equiv 0\quad \text{in}\quad D.
\end{equation}
Indeed, we know that the  harmonic function $\psi(x,y)$ attains its maximum and minimum on the boundary. And if $m\neq 0$, the Hopf's maximum principle yields $\psi_y(x,-d)\neq 0$. On the other hand, by the mean-value theorem, \eqref{3.18} implies that there must exist a point $(x_0,-d)$ such that $\psi_y(x_0,-d)=0$, which makes a contradiction. Thus we obtain $m=0$, which combined with the equations \eqref{2.11}-\eqref{2.12} and the maximum principle enforces \eqref{3.19}. Hence we have
\begin{equation}\label{3.21}
\psi_y=0\quad \text{and}\quad \psi_x=0,
\end{equation}
which in turn lead to
\begin{equation}\label{3.22}
u=c\quad \text{and} \quad v=0.
\end{equation}
Thus we get from \eqref{2.1} and \eqref{2.2} that
\begin{equation}\label{3.23}
P_x=0\quad \text{and} \quad P_y=-\rho g.
\end{equation}
Then differentiate \eqref{2.7} with respect to $x$ to get
\begin{equation}\label{3.24}
P_x(x,\eta(x))+P_y(x,\eta(x))\eta'(x)=0,
\end{equation}
which combined with \eqref{3.23} lead to
\begin{equation}\label{3.25}
\eta(x)=\eta_0.
\end{equation}
Performing a shift, we may assume the flat surface $\eta_0=0$. Hence \eqref{3.25}, \eqref{3.21} and \eqref{2.14} imply $Q=d$ and the total pressure thus reduced to the hydrostatic pressure
\begin{equation}\label{3.26}
P=P_{atm}-\rho g y
\end{equation}
by \eqref{2.16} and thus the dynamic pressure is identically zero.

\noindent\emph{Case 3. $k<c$. }

\vspace{3mm}
Denote $k-c=\bar{c}<0$, then we have
\begin{equation}\label{3.16}
\bar{c}=\frac 1 L \int_0^L\psi_y(x,-d)dx<0.
\end{equation}
A similar discussion as the case $k>c$ leads to
\begin{equation}\label{3.17}
u(x,y)<c, \quad\text{for} \;\; -d\leq y \leq \eta(x).
\end{equation}
Hence this case can be treated in a similar way as the case with the irrotational regular wave train without any current \cite{Co1} and we get that the dynamic pressure attains its maximum value at the wave crest and its minimum value at the wave trough.

\vspace{3mm}

Concluding the Case 1-3, we complete the proof of Theorem \ref{the1}.
\end{proof}
\subsection{The dynamic pressure in deep water}
The governing equations for irrotational regular wave trains in deep water are the following \cite{D3, Jo1}.
\begin{align}
(u-c)u_x+vu_y&=-\frac 1 \rho P_x,\quad\;\;\quad -d\leq y\leq \eta(x),\label{4.1}\\
(u-c)v_x+vv_y&=-\frac 1 \rho P_y-g,\!\quad -d\leq y\leq \eta(x),\label{4.2}\\
u_x+v_y&=0,\qquad\;\;\qquad\;-d\leq y\leq \eta(x),\label{4.3}\\
u_y&=v_x,\qquad \;\; \qquad -d\leq y\leq \eta(x),\label{4.4}\\
v&=(u-c)\eta_x,\;\;\:\quad \textrm{on} \quad y=\eta(x),\label{4.5}\\
P&=P_{atm},\;\;\qquad\quad \textrm{on} \quad y=\eta(x),\label{4.6}\\
(u,v)&\rightarrow (0,0) \;\;\qquad\quad \text{as}\;y\rightarrow -\infty \;\; \text{uniformly for}\; x\in\mr. \label{4.7}
\end{align}
Using the stream function $\psi$, we can reformulate the governing equations in the moving frame as follows.
\begin{align}
&\psi_{xx}+\psi_{yy}=0,\quad\;\;\quad \text{for}\; -d\leq y\leq \eta(x),\label{4.8}\\
&\psi=0,\quad\;\;\quad  \text{on}\; y=\eta(x),\label{4.9}\\
&\frac {\psi^2_x+\psi^2_y} {2g}+y=E\quad  \text{on}\; y=\eta(x),\label{4.10}\\
&\triangledown\psi\rightarrow (0,-c),\quad\;\text{as}\;y\rightarrow -\infty \;\; \text{uniformly for}\; x\in\mr,\label{4.11}
\end{align}
where the constant $E$ is the total head and it is expressed by
\begin{equation}\label{4.12}
\frac {\psi^2_x+\psi^2_y} {2g}+y+\frac {P-P_{atm}} {\rho g}=E
\end{equation}
throughout $\overline{D_\eta}=\{(x,y): -\infty <y\leq \eta(x)\}$. The left side of \eqref{4.12} is the Bernoulli's law. Then the total pressure can be recovered from
\begin{equation}\label{4.13}
P=P_{atm}+\rho g E-\rho g y-\rho\frac {\psi^2_x+\psi^2_y} {2}.
\end{equation}
Hence the dynamic pressure is given by
\begin{equation}\label{4.14}
p=P(x,y)-(P_{atm}-\rho g y)=\rho g E-\rho\frac {\psi^2_x+\psi^2_y} {2}.
\end{equation}
And we assume that there is no underlying current, that is \cite{D3}
\begin{equation}\label{k}
k=\frac 1 L \int_0^Lu(x,y_0)dx=0,
\end{equation}
where $y_0$ is any fixed depth below the wave trough level, which implies that
\begin{equation}\label{u}
u<c \quad \text{throughout}\;\;\overline{D_\eta}.
\end{equation}
Indeed, let $\bar{y}\in \mr^-$ be negative enough such that $\psi_y<0$ uniformly in $x$ for $-\infty<y\leq \bar{y}$: it follows from the equation \eqref{4.11} that such a $\bar{y}$ exists. On the other hand, by the maximum principle, the $L$-periodicity in the $x$ variable of the harmonic function $\psi(x,y)$ ensures that its maximum and minimum in the domain
$D_{\bar{y}}=\{(x,y):\bar{y}\leq y\leq \eta(x)\}$ will be attained on the boundary. Moreover, the Hopf's maximum principle and the fact $\psi_y(x,\bar{y})<0$ mean that the maximum value is attained on $y=\bar{y}$ and the minimum value must therefore be attained all along the free surface $y=\eta(x)$. Hence $\psi_y<0$ on the boundary of $D_{\bar{y}}$ and apply the maximum principle on the harmonic function $\psi_y$ yields $\psi_y<0$ throughout $D_{\bar{y}}$. Hence, $\psi_y<0$ throughout $\overline{D_\eta}$, which  then ensures \eqref{u}.

The following lemma is some results about the deep-water Stokes wave, which is crucial in our investigation of dynamic pressure in deep water.
\begin{lemma}\label{lem1}\cite{D3,D1}
Denote $D_\eta^+=\{(x,y):x\in(0,L/2),y\in(-\infty,\eta(x))\}$,
then the following strict inequalities hold
\begin{equation}\label{4.15}
v(x,y)=-\psi_x(x,y)> 0\;\; \text{and}\;\; P_x<0,\quad \text{for}\;\; (x,y)\in D_\eta^+,
\end{equation}
and on the crest and trough lines there have $v=0$ and $P_x=0$.
\end{lemma}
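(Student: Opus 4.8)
\emph{Overview.} The assertion contains two independent statements — the sign of $v=-\psi_x$ and the sign of $P_x$ — and I would prove each by a maximum–principle argument on the half–period strip $D_\eta^+$, in the spirit of the finite–depth analysis of Theorem~\ref{the1} and of \cite{Co1}. The only genuinely new ingredient is that $D_\eta^+$ is unbounded below: the classical maximum and Hopf principles must be applied on the truncated domains $D_\eta^+\cap\{y>-M\}$ and the conclusion pushed to $M\to\infty$, the behaviour on the artificial floor $\{y=-M\}$ being controlled by the far–field conditions \eqref{4.7}, \eqref{4.11}.

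\emph{Step 1 (the sign of $v$).} Since $\psi$ is harmonic, so is $v=-\psi_x$, and $v$ is bounded and continuous on $\overline{D_\eta^+}$. The plan is to read off the boundary data. On the vertical sides $x=0$ and $x=L/2$ the antisymmetry of $v$ about the crest line together with its $L$–periodicity force $v\equiv0$; this is also the asserted vanishing on the crest and trough lines. On the top $y=\eta(x)$, relation \eqref{4.5} gives $v=(u-c)\eta'(x)$, and since $u-c<0$ by \eqref{u} while $\eta'\le0$ on $[0,L/2]$ (indeed $\eta'<0$ on $(0,L/2)$, $\eta$ decreasing from crest to trough), we get $v\ge0$ there. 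Finally $v\to0$ as $y\to-\infty$ by \eqref{4.7}. Thus $v\ge0$ on $\partial D_\eta^+$ and at infinite depth; the maximum principle on $D_\eta^+\cap\{y>-M\}$, letting $M\to\infty$, gives $v\ge0$ throughout, and since $v\not\equiv0$ the strong maximum principle yields $v>0$ in the interior.

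\emph{Step 2 (the sign of $P_x$).} From \eqref{4.13} one has $P_x=-\tfrac{\rho}{2}\,\partial_x\bigl(|\nabla\psi|^2\bigr)$. The device I would use is to note that, because $\psi_y=u-c<0$ on $D_\eta^+$ by \eqref{u}, the holomorphic function $\psi_y+i\psi_x$ is non–vanishing on the simply connected domain $D_\eta^+$, so $\log|\nabla\psi|^2=2\,\mathrm{Re}\,\log(\psi_y+i\psi_x)$ is harmonic there; hence
\[
\widetilde w:=\frac{P_x}{|\nabla\psi|^2}=-\frac{\rho}{2}\,\partial_x\bigl(\log|\nabla\psi|^2\bigr)
\]
is harmonic as well, and it is bounded, continuous on $\overline{D_\eta^+}$, and has the same sign as $P_x$. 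It then remains to check $\widetilde w\le0$ on $\partial D_\eta^+$ and at infinite depth. On $x=0$ and $x=L/2$ the symmetry of $P$ about the crest line — hence, by $L$–periodicity, about the trough line — gives $P_x=0$. On the free surface $P\equiv P_{atm}$, so differentiating along $y=\eta(x)$ gives $P_x=-P_y\,\eta'(x)\le0$, using $\eta'\le0$ on $[0,L/2]$ together with the strict monotonicity $P_y<0$ of the pressure with depth, which I would take from \cite{D1}. And $P_x\to0$ as $y\to-\infty$ by \eqref{4.7}. Running the truncated maximum principle and letting $M\to\infty$ then gives $\widetilde w\le0$ throughout $D_\eta^+$, and the strong maximum principle upgrades this to $\widetilde w<0$, i.e.\ $P_x<0$, in the interior, while $P_x=0$ persists on the crest and trough lines.

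\emph{Main obstacle.} The harder half is Step 2. The point is that $P_x$ itself does not satisfy a maximum principle — differentiating Bernoulli's law yields an equation for $P_x$ whose zeroth–order coefficient has the unfavourable sign — so one must pass to the harmonic quantity $\widetilde w=P_x/|\nabla\psi|^2$, which is legitimate precisely because \eqref{u} rules out stagnation; after that the remaining difficulty is the bookkeeping of the maximum principle on the unbounded strip via truncation and the decay \eqref{4.7}, \eqref{4.11}. The one external input I would cite rather than reprove is $P_y<0$ from \cite{D1}, which drives the sign of $P_x$ along the free surface.
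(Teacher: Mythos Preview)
The paper does not prove Lemma~\ref{lem1}: it is stated with the citation \cite{D3,D1} and then used as a black box in the proof of Theorem~\ref{the2}. There is therefore no ``paper's own proof'' to compare your attempt against.

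Your sketch is nonetheless close in spirit to what is done in those references. Step~1 is essentially Henry's argument in \cite{D3}: $v$ is bounded and harmonic, the symmetry/periodicity and \eqref{4.5}--\eqref{u} give $v\ge0$ on the finite part of $\partial D_\eta^+$, the decay \eqref{4.7} controls the artificial floor $y=-M$, and one lets $M\to\infty$. Step~2 uses the right structural idea, namely that $\psi_y+i\psi_x$ is holomorphic and nonvanishing by \eqref{u}, so $\log|\nabla\psi|^2$ is harmonic and $\widetilde w=\partial_x\log|\nabla\psi|^2$ obeys a maximum principle; this is the device of \cite{CS,D1}. Two small points worth tightening: the claim $P_x\to0$ as $y\to-\infty$ does not follow from \eqref{4.7} alone but needs an interior gradient estimate for the harmonic functions $u,v$; and the truncation argument only gives $|\widetilde w|<\varepsilon$ on $y=-M$, so you should phrase the limit as $\widetilde w\le\varepsilon$ throughout, then send $\varepsilon\downarrow0$.

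The one place to be careful is your reliance on $P_y<0$ from \cite{D1} to get the sign of $P_x$ along the free surface. In \cite{D1} the inequalities $P_x<0$ and $P_y<0$ are established together, and the logical flow there does not run ``first $P_y<0$, then $P_x<0$ via $P_x=-P_y\,\eta'$''; so make sure your citation is not circular. A self-contained alternative, closer to \cite{CS,D1}, is to observe from \eqref{4.10} that $\log|\nabla\psi|^2=\log\bigl(2g(E-\eta(x))\bigr)$ along the surface, which is strictly increasing on $(0,L/2)$; combining this with the Hopf lemma at the lateral walls and the interior strong maximum principle lets you close the argument for $\widetilde w$ without invoking $P_y<0$ as an input.
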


Now, it is the position to present the result about the dynamic pressure in deep water.
\begin{theorem}\label{the2}
The dynamic pressure in an irrotational regular wave train in deep water flows without underlying currents attains its maximum value at the crest and minimum value at the wave trough.
\end{theorem}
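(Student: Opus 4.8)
The plan is to mimic Case 3 of the proof of Theorem~\ref{the1} (the case $u<c$), but to replace the interior maximum‑principle step for $p$ — which is unavailable on the unbounded domain $\overline{D_\eta^{+}}$ — by Henry's monotonicity result, Lemma~\ref{lem1}. By property~(iii) of a regular wave train, $u$ and $P$ are symmetric and $v$ is antisymmetric about the crest line, so the dynamic pressure $p$ in \eqref{4.14} is symmetric about the crest line and $L$‑periodic in $x$; hence it suffices to show $\max_{\overline{D_\eta^{+}}}p=p(0,\eta(0))$ and $\min_{\overline{D_\eta^{+}}}p=p(L/2,\eta(L/2))$, each attained only at the indicated point. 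I would work with the two forms
\[
p=\rho g E-\frac{\rho}{2}\bigl(v^{2}+(u-c)^{2}\bigr),\qquad p=P-P_{atm}+\rho g y,
\]
the second giving $p_{x}=P_{x}$. I would also record that \eqref{u} gives $u<c$ on all of $\overline{D_\eta}$, so that $v^{2}+(u-c)^{2}>0$ everywhere, and that \eqref{4.7} gives $p\to\rho g E-\frac{\rho}{2}c^{2}=:p_{\infty}$ as $y\to-\infty$ uniformly in $x$, so that $p$ is bounded on $\overline{D_\eta^{+}}$.

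First I would eliminate interior extrema by contradiction, exactly as the introduction advertises. If $p$ had a local extremum at a point $(x_{0},y_{0})\in D_\eta^{+}$, then $\nabla p(x_{0},y_{0})=0$, so in particular $0=p_{x}(x_{0},y_{0})=P_{x}(x_{0},y_{0})$, contradicting $P_{x}<0$ on $D_\eta^{+}$ from Lemma~\ref{lem1}. Hence $p$ has no local extremum in the open set $D_\eta^{+}$, so the supremum and infimum of $p$ over $\overline{D_\eta^{+}}$ are either attained on $\partial D_\eta^{+}=\{x=0\}\cup\{x=L/2\}\cup\{y=\eta(x)\}$ or else approached only as $y\to-\infty$, in which case they equal $p_{\infty}$.

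Next I would analyse $p$ on the three pieces of $\partial D_\eta^{+}$. On the free surface, $p=\rho g\eta(x)$ by \eqref{4.6}, which by property~(ii) is strictly decreasing from $\rho g\eta(0)$ to $\rho g\eta(L/2)$. On the crest and trough lines $v=0$ by Lemma~\ref{lem1}, so there $p_{y}=-\rho\bigl(vv_{y}+(u-c)u_{y}\bigr)=-\rho(u-c)u_{y}$. Applying Hopf's lemma to the harmonic function $v$, which is positive in $D_\eta^{+}$ and vanishes on $\{x=0\}$ and $\{x=L/2\}$, gives $v_{x}(0,y)>0$ and $v_{x}(L/2,y)<0$; combined with \eqref{4.4} this yields $u_{y}(0,y)>0$ and $u_{y}(L/2,y)<0$. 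Since $u-c<0$, we obtain $p_{y}>0$ on the crest line and $p_{y}<0$ on the trough line. Hence $y\mapsto p(0,y)$ increases strictly up to its surface value $\rho g\eta(0)$ and $y\mapsto p(L/2,y)$ decreases strictly down to its surface value $\rho g\eta(L/2)$; letting $y\to-\infty$ and using $p\to p_{\infty}$ shows that $p$ ranges over $(p_{\infty},\rho g\eta(0)]$ on the crest line and over $[\rho g\eta(L/2),p_{\infty})$ on the trough line — in particular $\rho g\eta(L/2)<p_{\infty}<\rho g\eta(0)$.

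Putting the pieces together, since $p$ has no interior local extremum, $\sup_{\overline{D_\eta^{+}}}p=\max\{\rho g\eta(0),p_{\infty}\}=\rho g\eta(0)=p(0,\eta(0))$, attained only at the crest, and symmetrically $\inf_{\overline{D_\eta^{+}}}p=\min\{\rho g\eta(L/2),p_{\infty}\}=\rho g\eta(L/2)=p(L/2,\eta(L/2))$, attained only at the trough; by the symmetry and $L$‑periodicity of $p$ this extends to the full fluid domain $\overline{D_\eta}$. The main obstacle, as the introduction flags, is precisely the unbounded depth: the interior maximum principle for $p$ used in Theorem~\ref{the1} is not available, and moreover one must guard against the extrema being ``lost at $y=-\infty$''. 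Both difficulties are resolved here by invoking Henry's strict inequality $P_{x}<0$ (Lemma~\ref{lem1}) to kill interior critical points, and by exploiting the strict monotonicity of $p$ along the crest and trough lines, which forces $p(\text{crest})>p_{\infty}>p(\text{trough})$ and thereby pins the extrema to the surface.
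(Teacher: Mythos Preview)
Your proof is correct and follows essentially the same route as the paper: rule out interior extrema via Lemma~\ref{lem1}, then analyse $p$ along the free surface and the crest/trough half-lines using Hopf's lemma applied to $v$. Two minor differences are worth noting: you reach the contradiction $P_x(x_0,y_0)=0$ more directly via $p_x=P_x$ (the paper instead first derives $u_x=u_y=0$ from $\nabla p=0$ and then invokes \eqref{4.1}), and you explicitly compute the limiting value $p_\infty=\rho g E-\tfrac{\rho}{2}c^2$ and sandwich it strictly between $p(\text{trough})$ and $p(\text{crest})$, thereby closing a point the paper leaves implicit about the extrema not escaping to $y\to-\infty$.
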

\begin{proof}
From Lemma \ref{lem1}, we know that $v>0$ throughout $D_\eta^+$. Then the Hopf's maximum principle of the harmonic function $v$ and the equations \eqref{4.3}-\eqref{4.4} yield
\begin{align}
&u_y(0,y)=v_x(0,y)>0,\quad  \text{for} \quad y\in(-\infty,\eta(0)),\label{4.16}\\
&u_y(L/2,y)=v_x(L/2,y)<0,\quad  \text{for} \quad y\in(-\infty,\eta(L/2)). \label{4.17}
\end{align}
Combined with \eqref{4.14} and \eqref{u}, \eqref{4.16}-\eqref{4.17} ensure that
\begin{equation}\label{4.18}
p=P(x,y)-(P_{atm}-\rho g y)=\rho g E-\rho\frac {v^2+(c-u)^2} {2}
\end{equation}
is strictly decreasing as we move down along the vertical half-line $[(0,\eta(0)), (0,-\infty)]$, and it is  strictly decreasing along  $[(L/2,\eta(L/2)), (L/2,-\infty)]$  as we move upwards towards the surface, below the trough. Furthermore, in view of \eqref{4.6} and \eqref{4.14} and the monotonicity of the wave profile between the crest and a successive trough, we have that $p$ is also strictly decreasing as we descend from the crest towards the trough along the upper boundary of $D_\eta^+$. On the other hand \eqref{4.16}-\eqref{4.17} and \eqref{4.7} tell us that $u$ decreases strictly towards zero along $x=0$ as y tends to minus infinity, whereas it increases strictly towards zero as $y \rightarrow -\infty$ along $x=L/2$, implying $u(0,y)>0$ for $y\in(-\infty, \eta(0)]$ and $u(L/2,y)<0$ for $y\in(-\infty, \eta(L/2)]$. Thus the maximum of $p$ along the boundary of $D_\eta^+$ can only be attained at $(0,\eta(0))$ and the minimum of $p$ along the boundary of $D_\eta^+$ can only be attained at $(L/2,\eta(L/2))$.

Now we prove the extrema of $p$ can not occur in the interior of the domain $D_\eta^+$. Assume that the contrary that there is a point $(x_0,y_0)\in D_\eta^+$ such that $p(x_0,y_0)$ is the extrema, then we have
\begin{align}
p_x&=-\rho \left(vv_x+(u-c)u_x\right)=0,\label{4.19}\\
p_y&=-\rho \left(vv_y+(u-c)u_y\right)=0,\label{4.20}
\end{align}
at $(x_0,y_0)$. Using the equations \eqref{4.3}-\eqref{4.4} and \eqref{u}, we can solve the above equations to get
\begin{equation}\label{4.21}
u_x(x_0,y_0)=u_y(x_0,y_0)=0.
\end{equation}
Hence
\begin{equation}\label{4.22}
P_x(x_0,y_0)=0
\end{equation}
follows by \eqref{4.1}, which contradicts to \eqref{4.15} in Lemma \ref{lem1}.
This combined with the previously discussions of the behaviour of $p$ along the boundary of $D_\eta^+$ complete the proof.
\end{proof}

\begin{remark} From the results of Theorem \ref{the1} and Theorem \ref{the2}, we know that the uniform underlying currents and infinite depth make no difference on the position of the extrema of the dynamic pressure.
\end{remark}

\vspace{0.5cm}
\noindent {\bf Acknowledgements.}
The work of Gao is partially supported  by the NSFC grant No. 11531006, National Basic Research Program of China (973 Program) No. 2013CB834100, PAPD of Jiangsu Higher Education Institutions, and the Jiangsu Collaborative Innovation Center for Climate Change.

\end{document}